\renewcommand{\subsection}{\subsubsection}
\newtheorem{proposition}{Proposition}[section]
\begin{document}

\title{\bf Structural stability of shock waves and current-vortex sheets in shallow water magnetohydrodynamics}

\author{{\bf Yuri Trakhinin}\\
Sobolev Institute of Mathematics, Koptyug av. 4, 630090 Novosibirsk, Russia\\
and\\
Novosibirsk State University, Pirogova str. 1, 630090 Novosibirsk, Russia\\
E-mail: trakhin@math.nsc.ru
}

\date{ }

\maketitle

\begin{abstract}
We study the structural stability of shock waves and current-vortex sheets in shallow water magnetohydrodynamics (SMHD) in the sense of the local-in-time existence and uniqueness of discontinuous solutions satisfying corresponding jump conditions. The equations of SMHD form a symmetric hyperbolic system which is formally analogous to the system of 2D compressible elastodynamics for particular nonphysical deformations. Using this analogy and the recent results in \cite{MTT19} for shock waves in 2D compressible elastodynamics, we prove that shock waves in SMHD are structurally stable if and only if the fluid height increases across the shock front. For current-vortex sheets the fluid height is continuous whereas the tangential components of the velocity and the magnetic field may have a jump. Applying a so-called secondary symmetrization of the symmetric system of SMHD equations, we find a condition sufficient for the structural stability of current-vortex sheets.
\end{abstract}

\vspace{2mm}
\noindent{\bf Keywords:} shallow water magnetohydrodynamics; symmetric hyperbolic system; local-in-time existence of discontinuous solutions; shock waves; current-vortex sheets.

\section{Introduction}
\label{s1}

The equations of shallow water magnetohydrodynamics (SMHD) were proposed by Gilman \cite{Gil} for studying the global dynamics of the solar tachocline which is a thing transition layer between the Sun's radiative interior and the differentially rotating outer convective zone. The SMHD equations are derived in \cite{Gil} from the equations of ideal incompressible magnetohydrodynamics (MHD) under the influence of gravity by depth averaging and assuming that the pressure is hydrostatic and
the depth of the layer of a perfectly conducting fluid is small enough. The SMHD system is important not only for astrophysical applications like the solar tachocline (see, e.g., \cite{DikGil,Gil,Zaq}) but may also be used for modelling conducting shallow water fluids in laboratory and industrial environments (further references about applications of SMHD can be found, for example, in \cite{JETP}).

The equations of SMHD \cite{Gil} read:
\begin{align}
& \frac{{\rm d} h}{{\rm d}t} +h{\rm div}\,{v} =0, \label{1}\\[3pt]
& \frac{{\rm d}v}{{\rm d}t}-({B}\cdot\nabla ){B}+g{\nabla}h  =0, \label{2}\\[3pt]
& \frac{{\rm d}B}{{\rm d}t}-(B\cdot\nabla )v=0\label{3},
\end{align}
where $h$ is the height of a conducting fluid, $v= (v_1,v_2)\in \mathbb{R}^2$ is the fluid velocity, $B=(B_1,B_2)\in \mathbb{R}^2$ is the magnetic field, the constant $g>0$ is the gravitational acceleration, ${\rm d} /{\rm d} t =\partial_t+({v} \cdot{\nabla} )$ is the material derivative, $\partial_t=\partial/\partial t$ is the time derivative, $\nabla =(\partial_1,\partial_2)$, $\partial_i=\partial/\partial x_i$ ($i=1,2$), $x=(x_1,x_2)$, and $x_1$ and $x_2$ are spatial coordinates.

System \eqref{1}--\eqref{3} is a closed system for the unknown $ U =U (t, x )=(h, v,B)\in \mathbb{R}^5$ and can be written as the following quasilinear system:
\begin{equation}
\label{3'}
A_0(U )\partial_tU+A_1(U)\partial_1U +A_2(U)\partial_2U=0,
\end{equation}
where $A_0= \mbox{block diag}\, (g/h ,I_4)$,
\[
A_1=\begin{pmatrix}
{\displaystyle\frac{gv_1}{h}} & ge_1 & \underline{0}  \\[7pt]
ge_1^{\top}& v_1I_2 & -B_{1}I_2  \\[3pt]
\underline{0}^{\top} &-B_{1}I_2 &  v_1I_2
\end{pmatrix},\quad
A_2=\begin{pmatrix}
{\displaystyle\frac{gv_2}{h}} & ge_2 & \underline{0}  \\[7pt]
ge_2^{\top}& v_2I_2 & -B_{2}I_2  \\[3pt]
\underline{0}^{\top} &-B_{2}I_2 &  v_2I_2
\end{pmatrix},
\]
$e_1=(1,0)$, $e_2=(0,1)$, $\underline{0}=(0,0)$ and  $I_m$ and $O_m$ denote the unit and zero matrices of order $m$ respectively. System \eqref{3'} is symmetric hyperbolic if   $A_0>0$, i.e., if the natural physical restriction
\begin{equation}
h >0 \label{3''}
\end{equation}
holds.

We do not include the equation
\begin{equation}\label{4}
{\rm div}(hB)=0
\end{equation}
into the full set \eqref{1}--\eqref{3} of the SMHD equations because \eqref{4} is just a divergence constraint on the initial data $U(0,x)=U_0(x)$. Indeed, from \eqref{1} and \eqref{3} we deduce
\begin{equation}\label{hB}
\partial_t(hB) + \nabla\times (hB\times v) + {\rm div} (hB)\,v=0
\end{equation}
that implies the linear equation ${\rm d} r/{\rm d}t+({\rm div}\, v)\,r=0$ for $r={\rm div}(h B)$. It follows from this equation that if \eqref{4} is satisfied initially, then it holds for all $t > 0$.

Using \eqref{4} and \eqref{hB}, we rewrite the SMHD equations \eqref{1}--\eqref{3} as the following system of conservation laws \cite{DeSt}:
\begin{equation}\label{5}
\left\{
\begin{array}{l}
 \partial_th +{\rm div} (h {v} )=0,\\[6pt]
 \partial_t(h {v} ) +{\rm div}(h{v}\otimes{v} -h{B}\otimes{B} ) +
{\nabla}(gh^2/2)=0, \\[6pt]
\partial_t(hB) + \nabla\times (hB\times v)=0.
\end{array}
\right.
\end{equation}
For system \eqref{5}, equation \eqref{4} is again a constraint on the initial data. Using \eqref{4}, we can derive from \eqref{5} equations \eqref{1}--\eqref{3}, i.e., systems  \eqref{1}--\eqref{3} and \eqref{5} are equivalent on smooth solutions (in fact, the same can be proved for piecewise smooth solutions with a regular discontinuity, e.g., a shock wave). In the next section we will write down the Rankine--Hugoniot jump conditions for the conservation laws \eqref{5} which should hold at each point of a regular strong discontinuity.

Some mathematical aspects for the hyperbolic system of SMHD like characteristics, Riemann invariants, simple wave solutions were studied by De Sterck \cite{DeSt}. He has also classified  discontinuous solutions satisfying the Rankine--Hugoniot  relations and discussed the Lax conditions for shock waves. The Riemann problem for the SMHD system was studied by Zaqarashvili et. al. \cite{Zaq}. We should also mention many studies of the stability of stationary solutions of the SMHD equations (see \cite{Hughes} and references therein).

As is known, strong discontinuities (e.g., shock waves) formally introduced for a system of hyperbolic conservation laws do not necessarily exist (at least, locally in time) as piecewise smooth solutions for the full range of admissible initial flow parameters. According to our know\-ledge, there were no studies of the local-in-time existence and uniqueness (structural stability) of discontinuous solutions of the SMHD system satisfying the corresponding Rankine--Hugoniot jump conditions. This is the main goal of the present paper.

Using a formal mathematical analogy between SMHD and 2D compressible elastodynamics \cite{Daf,Gurt,Jos} (see Section 3) as well as the recent results in \cite{MTT19} for shock waves in elastodynamics, we obtain a relatively instant result about the structural stability of SMHD shock waves under the fluid height increase assumption (see Section 4). We also consider current-vortex sheets in SMHD (see the next section). For them a crucial idea is the usage of a so-called secondary symmetrization proposed first in \cite{T05} for the system of ideal compressible MHD in the context of the study of the structural stability of classical current-vortex sheets. This enables us to find a condition sufficient for the structural stability of SMHD current-vortex sheets (see \eqref{ssc1} in Section 5). In the end of the paper, we also briefly discuss a possibility to find a necessary and sufficient structural stability condition for SMHD current-vortex sheets by using the recent results \cite{CHW1,CHW2,Hu} for compressible vortex sheets in 2D elastodynamics.

\section{Statement of the free boundary problems for shock waves and current-vortex sheets}
\label{s2}

Let $\Gamma (t)=\{ x_1=\varphi (t,x_2)\}$ be a curve of strong discontinuity for system \eqref{5}, i.e., we are interested in solutions of \eqref{5} that are smooth on either side of $\Gamma (t)$. As is known, to be weak solutions such piecewise smooth solutions should satisfy corresponding Rankine--Hugoniot jump conditions at each point of $\Gamma (t)$.  For the conservation laws \eqref{5} these jump conditions can be written in the following form:
\begin{align}
& [\mathfrak{m}]=0, \quad [\mathfrak{m}v_{\rm N}] +\frac{g}{2}|N|^2[h^2]=[h B_{\rm N}^{2}],\quad [\mathfrak{m}v_{\tau}]=[hB_{\rm N}B_{\tau}],\label{RH1}\\
& [\mathfrak{m}B_{\rm N}]=[h v_{\rm N}B_{\rm N}],\quad [\mathfrak{m}B_{\tau}]=[h v_{\tau}B_{\rm N}],\label{RH2}
\end{align}
where $[g]=g^+|_{\Gamma}-g^-|_{\Gamma}$ denotes the jump of $g$, with $g^{\pm}:=g$ in $\Omega^{\pm}(t)=\{\pm (x_1- \varphi (t,x_2))>0\}$, and
\[
\mathfrak{m}^{\pm}=h^{\pm} (v_{\rm N}^{\pm}-\partial_t\varphi),\quad v_{\rm N}^{\pm}=v^\pm\cdot N=v_1^{\pm}-v_2^{\pm}\partial_2\varphi ,\quad N=(1,-\partial_2\varphi),
\]
\[
B_{\rm N}^{\pm}=B^\pm\cdot N=B_{1}^{\pm}-B_{2}^{\pm}\partial_2\varphi ,\quad
v_{\tau}^{\pm}=v_1^{\pm}\partial_2\varphi +v_2^{\pm},\quad B_{\tau}^{\pm}=B_{1}^{\pm}\partial_2\varphi +B_{2}^{\pm}.
\]
Moreover, we have the condition $[h B_{\rm N}]=0$ coming from constraint \eqref{4}. On the other hand, the first condition in \eqref{RH2} is rewritten as $\partial_t\varphi [h B_{\rm N}]=0$. That is, this condition is implied by $[h B_{\rm N}]=0$ and can be thus excluded from \eqref{RH1}, \eqref{RH2}. Taking this and \eqref{3''} into account, we finally have the following system of jump conditions:
\begin{align}
& [\mathfrak{m}]=0, \quad [\mathfrak{b} ]=0,\quad [h]\big(\mathfrak{m}^2-\mathfrak{b}^2-\frac{g}{2}|N|^2\langle h\rangle h^+h^-|_{\Gamma}\big)=0,\label{RH1'} \\
& \mathfrak{m}[v_{\tau}]=\mathfrak{b} [B_{\tau}],\quad \mathfrak{m}[B_{\tau}]=\mathfrak{b} [v_{\tau}],\label{RH2'}
\end{align}
where $\mathfrak{m}:=\mathfrak{m}^{\pm}|_{\Gamma}$, $\mathfrak{b}:=h^{\pm} B_{\rm N}^{\pm}|_{\Gamma}$ and $\langle h\rangle =(h^++h^-)|_{\Gamma}$. The jump conditions \eqref{RH1'}, \eqref{RH2'} were written down in \cite{DeSt} for a stationary discontinuity (with $\varphi =0$).

Mathematically, all the regular strong discontinuities (satisfying the Rankine--Hugoniot conditions) are classified into two main types: {\it shock waves} and {\it characteristic discontinuities}. For shock waves, $\Gamma (t)$ should be not a characteristic of our quasilinear hyperbolic system. The curve $\Gamma (t)$ is a characteristic of system \eqref{3'} if $\det \widetilde{A}_1|_{\Gamma}=0$, where $\widetilde{A}_1$ is the so-called boundary matrix:
\begin{equation}
\widetilde{A}_1=\widetilde{A}_1(U,\varphi )= A_1(U)-A_0(U)\partial_t\varphi-A_2(U)\partial_2\varphi .\label{bmat}
\end{equation}
We easily calculate that
\begin{equation}\label{12}
\det \widetilde{A}_1(U^\pm,\varphi)|_{\Gamma}=\frac{g}{(h^{\pm})^6}\,\mathfrak{m}(\mathfrak{m}^2-\mathfrak{b}^2 )(\mathfrak{m}^2- \mathfrak{b}^2-g|N|^2(h^{\pm})^3)|_{\Gamma}.
\end{equation}

Let $[h]\ne 0$. Then, it follows from \eqref{12} and the last condition in \eqref{RH1'} that
\[
\det \widetilde{A}_1(U^\pm,\varphi)|_{\Gamma}=\left.\frac{\mp g^2[h](2\langle h\rangle -h^{\mp})}{2(h^{\pm})^5}\right|_{\Gamma}\mathfrak{m}(\mathfrak{m}^2-\mathfrak{b}^2 ).
\]
That is, for $[h]\ne 0$ we have $\det \widetilde{A}_1|_{\Gamma}=0$ if $\mathfrak{m}=0$ or $\mathfrak{m}^2=\mathfrak{b}^2$. If $[h]=0$ and $\mathfrak{m}^2\ne \mathfrak{b}^2$, then \eqref{RH1'},  \eqref{RH2'} imply $[v]=[B]=0$, i.e., the flow is continuous (for $\mathfrak{m}^2\ne \mathfrak{b}^2$, system \eqref{RH2'} implies $[v_{\tau}]=[B_{\tau}]=0$). Hence, we have shock waves if and only if $\mathfrak{m}\ne 0$, $\mathfrak{m}^2\ne \mathfrak{b}^2$ and $[h]\ne 0$. For shock waves, the jump conditions \eqref{RH1'},  \eqref{RH2'} are thus rewritten as
\begin{equation}
[\mathfrak{m}]=0, \quad [\mathfrak{b} ]=0,\quad \mathfrak{m}^2-\mathfrak{b}^2=\frac{g}{2}|N|^2\langle h\rangle h^+h^-,\quad
 [v_{\tau}]=0,\quad [B_{\tau}]=0\quad \mbox{on}\ \Gamma (t).\label{RH}
\end{equation}

The free boundary problem for shock waves is the problem for the systems
\begin{equation}
A_0(U^{\pm})\partial_tU^{\pm}+A_1(U^{\pm} )\partial_1U^{\pm}+A_2(U^{\pm} )\partial_2U^{\pm}=0\quad \mbox{in}\ \Omega^{\pm}(t),
\label{21}
\end{equation}
cf. \eqref{3'}, with the boundary conditions \eqref{RH} on $\Gamma (t)$ and  the initial
\begin{equation}
{U}^{\pm} (0,{x})={U}_0^{\pm}({x}),\quad {x}\in \Omega^{\pm} (0),\quad \varphi (0,{x}_2)=\varphi _0({x}_2),\quad {x}_2\in\mathbb{R}.\label{indat}
\end{equation}
Moreover, as for the Cauchy problem, the divergence constraint \eqref{4} is preserved by problem \eqref{RH}--\eqref{indat}.

\begin{proposition}
Suppose that problem \eqref{RH}--\eqref{indat} has a smooth solution  $(U^+,U^-,\varphi)$ for $t\in [0,T]$ satisfying the shock wave assumption $\mathfrak{m}\neq 0$. Then, if the initial data \eqref{indat} satisfy \eqref{4}, then
\begin{equation}\label{4'}
{\rm div}\,(h^{\pm} B^{\pm})=0\quad \mbox{in}\ \Omega^{\pm}(t)
\end{equation}
for all $t\in [0,T]$.
\label{p1}
\end{proposition}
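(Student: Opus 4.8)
The plan is to propagate the constraint \eqref{4} by the method of characteristics, exactly as for the Cauchy problem, adding one extra argument at the free boundary $\Gamma(t)$. On smooth solutions the systems \eqref{21} are nothing but \eqref{1}--\eqref{3} written for $U^\pm$ in $\Omega^\pm(t)$, so the very computation that produces \eqref{hB} and the scalar equation ${\rm d}r/{\rm d}t+({\rm div}\,v)\,r=0$ for $r={\rm div}(hB)$ gives, in each $\Omega^\pm(t)$,
\[
\frac{{\rm d}r^\pm}{{\rm d}t}+({\rm div}\,v^\pm)\,r^\pm=0,\qquad r^\pm:={\rm div}(h^\pm B^\pm),
\]
with $r^\pm|_{t=0}=0$ in $\Omega^\pm(0)$ by hypothesis \eqref{4} on the initial data. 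The characteristics of this transport equation are the particle paths $\dot x=v^\pm$, along which $r^\pm$ solves a linear homogeneous ODE; hence $r^\pm$ vanishes at any point whose backward characteristic either reaches $\Omega^\pm(0)$, or reaches a point of $\Gamma$ where $r^\pm$ is already known to vanish.

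The shock assumption enters through the sign of $\mathfrak{m}$. Since $\mathfrak{m}$ is continuous and nowhere zero on $[0,T]$ it has constant sign; assume $\mathfrak{m}>0$ (the case $\mathfrak{m}<0$ is symmetric after interchanging $\Omega^+$ and $\Omega^-$). Along a particle path one computes $\frac{{\rm d}}{{\rm d}t}(x_1-\varphi)=v_{\rm N}^\pm-\partial_t\varphi=\mathfrak{m}/h^\pm>0$ on $\Gamma$, so the fluid crosses $\Gamma$ from $\Omega^-$ into $\Omega^+$. Consequently the backward characteristics issued from points of $\Omega^-(t)$ remain in $\bigcup_s\Omega^-(s)$ down to $\{t=0\}$, and the ODE forces $r^-\equiv0$ in $\Omega^-(t)$ for all $t\in[0,T]$; in particular $r^-|_{\Gamma}=0$. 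In $\Omega^+(t)$ the backward characteristics may instead leave through $\Gamma$, so by the above it remains only to prove the boundary relation $r^+|_{\Gamma}=0$.

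For this I would flatten the front by $\widetilde x_1=x_1-\varphi(t,x_2)$ (keeping $t$ and $x_2$), so that $\Omega^\pm=\{\pm\widetilde x_1>0\}$, that ${\rm div}(h^\pm B^\pm)=\partial_{\widetilde x_1}(h^\pm B_{\rm N}^\pm)+\partial_{\widetilde x_2}(h^\pm B_2^\pm)$, and that \eqref{21} takes the form $\widetilde A_1\,\partial_{\widetilde x_1}U^\pm+A_0\,\partial_tU^\pm+A_2\,\partial_{\widetilde x_2}U^\pm=0$ with $\widetilde A_1$ the boundary matrix \eqref{bmat}. For a shock $\Gamma$ is noncharacteristic: by \eqref{12} and the jump conditions \eqref{RH} (namely $[h]\neq0$, $\mathfrak{m}\neq0$, $\mathfrak{m}^2\neq\mathfrak{b}^2$, and $h^\pm>0$) one has $\det\widetilde A_1|_{\Gamma}\neq0$, so one may solve for the normal derivative, $\partial_{\widetilde x_1}U^\pm|_{\Gamma}=-\widetilde A_1^{-1}(A_0\,\partial_tU^\pm+A_2\,\partial_{\widetilde x_2}U^\pm)|_{\Gamma}$, and substitute into the formula for ${\rm div}(h^\pm B^\pm)|_{\Gamma}$. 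This expresses $r^\pm|_{\Gamma}$ through the traces $U^\pm|_{\Gamma}$ and their derivatives along $\Gamma$ (in $t$ and $x_2$). Forming the jump and invoking \eqref{RH} — the relations $[\mathfrak{m}]=0$, $[\mathfrak{b}]=0$, $[v_{\tau}]=[B_{\tau}]=0$, together with the Bernoulli-type identity — differentiated tangentially along $\Gamma$ where needed, the tangential-derivative contributions cancel and one is left with $[r]|_{\Gamma}=0$. Combined with $r^-|_{\Gamma}=0$ this gives $r^+|_{\Gamma}=0$, and then the characteristics argument yields $r^+\equiv0$ in $\Omega^+(t)$, which together with $r^-\equiv0$ is exactly \eqref{4'}.

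The main obstacle is the cancellation in this last step, i.e.\ the identity $[{\rm div}(hB)]|_{\Gamma}=0$: this is an elementary but somewhat lengthy computation that rests on the invertibility of $\widetilde A_1$ on $\Gamma$ — available precisely because $\mathfrak{m}\neq0$ — and on using the complete list \eqref{RH} of jump relations. Everything else is just the method of characteristics for a linear homogeneous transport equation, exactly as in the Cauchy case.
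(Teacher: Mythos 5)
Your overall route is the same as the paper's: the paper also reduces matters to the transport equation for $r^\pm={\rm div}(h^\pm B^\pm)$ coming from \eqref{hB}, takes WLOG $v_{\rm N}^\pm|_\Gamma>\partial_t\varphi$ (which is exactly your constant-sign argument for $\mathfrak m$), propagates the constraint from the initial data on the side where particles do not enter through $\Gamma$, and for the other side invokes the boundary (jump) conditions --- the paper simply refers to \cite{MZ} for this last step rather than writing it out. Two concrete corrections to the step you left as ``the main obstacle''. First, you do not need, and should not invoke, invertibility of the full boundary matrix $\widetilde A_1$ on $\Gamma$: by \eqref{12} and \eqref{RH1'} this fails when $[h]=0$, so it is not implied by the Proposition's sole hypothesis $\mathfrak m\neq0$; what the argument actually requires is only $v_{\rm N}^\pm-\partial_t\varphi\neq0$, i.e.\ precisely $\mathfrak m\neq0$. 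Indeed, in the straightened variables one has $r^\pm=\partial_1(h^\pm B^\pm_{\rm N})+\partial_2(h^\pm B^\pm_2)$, and taking only the $N$-component of \eqref{hB'} and eliminating $\partial_1(h^\pm B^\pm_{\rm N})$ by this formula gives on $x_1=0$ an identity involving only traces, their tangential derivatives, and $r^\pm|_\Gamma$, namely
\[
\partial_t\big(h^\pm B^\pm_{\rm N}\big)+h^\pm B^\pm_2\,\partial_t\partial_2\varphi+\partial_t\varphi\,\partial_2\big(h^\pm B^\pm_2\big)+\partial_2 w^\pm+r^\pm\big(v^\pm_{\rm N}-\partial_t\varphi\big)=0,
\qquad w^\pm=h^\pm\big(B^\pm_1v^\pm_2-B^\pm_2v^\pm_1\big).
\]
Second, when you take the jump of this identity and use \eqref{RH}, the tangential terms do \emph{not} cancel to give $[r]=0$: using $[\mathfrak b]=0$, $[\mathfrak m]=0$, $[v_\tau]=[B_\tau]=0$ one finds $[h^\pm B^\pm_2]=[h]B_\tau/|N|^2$ and $[w^\pm]=-\partial_t\varphi\,[h]B_\tau/|N|^2$, and after the equality of mixed derivatives of $\varphi$ the surviving relation is
\[
\big[\,r\,(v_{\rm N}-\partial_t\varphi)\,\big]=\mathfrak m\,\Big[\frac{r}{h}\Big]=0\quad\text{on }\Gamma(t),
\]
not $[r]_\Gamma=0$. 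This discrepancy is harmless for your purpose: since $r^-\equiv0$ in $\Omega^-(t)$ by your characteristics argument, the relation above still yields $r^+|_\Gamma=0$, and then the transport equation propagates $r^+\equiv0$ in $\Omega^+(t)$, which is \eqref{4'}. So your plan is sound and matches the paper's (i.e.\ the \cite{MZ}) mechanism, but the decisive cancellation must be carried out as above, and the correct noncharacteristicity hypothesis behind it is $\mathfrak m\neq0$ rather than $\det\widetilde A_1|_\Gamma\neq0$.
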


\begin{proof}
Since $\mathfrak{m}\neq 0$, without loss of generality we may suppose that $v_{\rm N}^{\pm}|_{\Gamma}>\partial_t\varphi$.
It follows from \eqref{hB} that
\begin{equation}\label{hB'}
\partial_t(h^{\pm}B^{\pm}) + \nabla\times (h^{\pm}B^{\pm}\times v^{\pm}) + {\rm div} (h^{\pm} B^{\pm})\,v^{\pm}=0\quad \mbox{in}\ \Omega^{\pm}(t).
\end{equation}
Using then \eqref{hB'} and $v_{\rm N}^{\pm}|_{\Gamma}>\partial_t\varphi$ and following literally the arguments from \cite{MZ} towards the proof of the divergence constraint ${\rm div}\, H=0$ for the magnetic field $H$ on both side of the shock front in full MHD, we get constraints \eqref{4'} for all $t\in [0,T]$.
\end{proof}

We now consider a characteristic discontinuity for which $\mathfrak{m}=\mathfrak{b}=0$, i.e., $\partial_t\varphi=v_{\rm N}^{\pm}|_{\Gamma}$ and $B_{\rm N}^{\pm}|_{\Gamma}=0$. Then, the last condition in \eqref{RH1'} implies $[h]=0$ whereas from \eqref{RH2'} we see that $v_{\tau}$ and $B_{\tau}$ may have an arbitrary jump. This type of a characteristic discontinuity is a counterpart of tangential discontinuities (or current-vortex sheets) in full MHD \cite{BThand,LL,T05,T09}. By analogy with full MHD we will call such a discontinuity {\it current-vortex sheet}. We thus have the following boundary conditions on a curve of a current-vortex sheet:
\begin{equation}
\partial_t\varphi=v_{\rm N}^{\pm},\quad [h]=0\quad \mbox{on}\ \Gamma (t).\label{cvs}
\end{equation}
The free boundary problem for current-vortex sheets is the problem for systems \eqref{21} with the boundary conditions \eqref{cvs} and  the initial data \eqref{indat}. We do not include the conditions
\begin{equation}\label{bcon}
 B_{\rm N}^{\pm}=0\quad \mbox{on}\ \Gamma (t)
\end{equation}
into the boundary conditions \eqref{cvs} because, as for current-vortex sheets in full MHD \cite{T09}, they are just constraints on the initial data \eqref{indat}. The same is true for the divergence constraint \eqref{4} which is also preserved by problem \eqref{21}, \eqref{indat}, \eqref{cvs}.

\begin{proposition}
Suppose that problem \eqref{21}, \eqref{indat}, \eqref{cvs} has a smooth solution  $(U^+,U^-,\varphi)$ for $t\in [0,T]$. Then, if the initial data \eqref{indat} satisfy \eqref{4'} and \eqref{bcon} for $t=0$, then \eqref{4'} and \eqref{bcon} hold for all $t\in [0,T]$.
\label{p2}
\end{proposition}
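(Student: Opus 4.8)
The plan is to adapt the computation used for constraint \eqref{4'} in Proposition~\ref{p1} and, for \eqref{bcon}, the argument for current--vortex sheets in full MHD from \cite{T09}; the decisive simplification here, compared with the shock case, is that the discontinuity is characteristic ($\mathfrak{m}=0$), so the fluid does not cross $\Gamma(t)$ and no datum on $\Gamma(t)$ has to be prescribed. Write $\Phi=\Phi(t,x):=x_1-\varphi(t,x_2)$, so that $\Gamma(t)=\{\Phi=0\}$, $\Omega^{\pm}(t)=\{\pm\Phi>0\}$ and $N=\nabla\Phi$. Along any integral curve $X^{\pm}(t)$ of $\partial_t+(v^{\pm}\cdot\nabla)$ one has
\[
\frac{{\rm d}}{{\rm d}t}\,\Phi(t,X^{\pm}(t))=\partial_t\Phi+(v^{\pm}\cdot\nabla)\Phi=v_{\rm N}^{\pm}-\partial_t\varphi .
\]
Since $v_{\rm N}^{\pm}-\partial_t\varphi$ vanishes on $\Gamma(t)$ by \eqref{cvs} and $\nabla\Phi\neq0$, we may write $v_{\rm N}^{\pm}-\partial_t\varphi=\alpha^{\pm}\Phi$ with $\alpha^{\pm}$ smooth; the displayed identity then becomes a linear ODE for $\Phi(t,X^{\pm}(t))$, so $\{\Phi=0\}$, and hence each $\Omega^{\pm}(t)$, is invariant under the flow of $v^{\pm}$. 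In particular, every transport equation I derive in $\Omega^{\pm}(t)$ can be integrated along particle paths that stay away from the lateral boundary $\bigcup_t(\{t\}\times\Gamma(t))$, so it is a pure initial-value problem with no boundary data --- this is exactly where being a current--vortex sheet rather than a shock makes the proof short.

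For \eqref{4'} I would simply repeat, in $\Omega^{\pm}(t)$, the computation used in Section~\ref{s1} for the Cauchy problem (and in Proposition~\ref{p1} for the shock front). Equation \eqref{hB'} holds in $\Omega^{\pm}(t)$; taking its divergence and using that the divergence of $\nabla\times\psi$ vanishes identically in two dimensions, we obtain for $r^{\pm}:={\rm div}(h^{\pm}B^{\pm})$ the transport equation
\[
\bigl(\partial_t+(v^{\pm}\cdot\nabla)\bigr)r^{\pm}+\bigl({\rm div}\,v^{\pm}\bigr)r^{\pm}=0\qquad\text{in }\Omega^{\pm}(t).
\]
By the invariance noted above this is an ODE along particle paths contained in $\Omega^{\pm}(t)$; combined with $r^{\pm}|_{t=0}=0$ in $\Omega^{\pm}(0)$ it forces $r^{\pm}\equiv0$ for all $t\in[0,T]$, i.e. \eqref{4'}.

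For \eqref{bcon} I would derive the evolution of $B_{\rm N}^{\pm}=B^{\pm}\cdot\nabla\Phi$. Using \eqref{3} in the form $(\partial_t+(v^{\pm}\cdot\nabla))B^{\pm}=(B^{\pm}\cdot\nabla)v^{\pm}$ together with the commutator identity $(\partial_t+(v^{\pm}\cdot\nabla))\partial_j\Phi=\partial_j\bigl(\partial_t\Phi+(v^{\pm}\cdot\nabla)\Phi\bigr)-\sum_i(\partial_i\Phi)\,\partial_j v_i^{\pm}$, the two occurrences of $(B^{\pm}\cdot\nabla)v^{\pm}$ cancel and one is left with
\[
\bigl(\partial_t+(v^{\pm}\cdot\nabla)\bigr)B_{\rm N}^{\pm}=(B^{\pm}\cdot\nabla)\bigl(v_{\rm N}^{\pm}-\partial_t\varphi\bigr)\qquad\text{in }\Omega^{\pm}(t).
\]
Inserting $v_{\rm N}^{\pm}-\partial_t\varphi=\alpha^{\pm}\Phi$ from the first paragraph gives $(B^{\pm}\cdot\nabla)(\alpha^{\pm}\Phi)=(B^{\pm}\cdot\nabla\alpha^{\pm})\Phi+\alpha^{\pm}B_{\rm N}^{\pm}$, so, restricting to $\Gamma(t)=\{\Phi=0\}$ --- where, by the invariance above, $\partial_t+(v^{\pm}\cdot\nabla)$ is tangential --- we obtain the scalar linear ODE
\[
\bigl(\partial_t+(v^{\pm}\cdot\nabla)\bigr)\bigl(B_{\rm N}^{\pm}|_{\Gamma}\bigr)=\bigl(\alpha^{\pm}|_{\Gamma}\bigr)\,B_{\rm N}^{\pm}|_{\Gamma}
\]
along the particle paths lying in $\Gamma(t)$. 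Since $B_{\rm N}^{\pm}|_{\Gamma(0)}=0$ by hypothesis, uniqueness yields $B_{\rm N}^{\pm}|_{\Gamma}\equiv0$ on $[0,T]$, i.e. \eqref{bcon}. (Together, \eqref{4'} and \eqref{bcon} also recover the full constraint \eqref{4}, since then $[hB_{\rm N}]=0$ holds trivially.)

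The argument is essentially routine; the only point that needs genuine care is the bookkeeping in the moving domain, i.e. checking that the transport equation for $r^{\pm}$ and the ODE for $B_{\rm N}^{\pm}|_{\Gamma}$ really close up without any datum on $\Gamma(t)$. This is guaranteed by the characteristic boundary condition \eqref{cvs}, in sharp contrast with the shock case of Proposition~\ref{p1}, where the flow does cross $\Gamma(t)$ and one genuinely needs the more delicate argument of \cite{MZ} built on the jump relation $[hB_{\rm N}]=0$.
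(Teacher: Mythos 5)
Your proof is correct and follows essentially the same route as the paper, which simply refers to the standard argument of \cite{T09}: derive linear transport equations for ${\rm div}(h^{\pm}B^{\pm})$ in $\Omega^{\pm}(t)$ and for $B_{\rm N}^{\pm}$ on $\Gamma(t)$, and use that the boundary is characteristic (condition \eqref{cvs}) so these equations integrate along particle paths without any boundary datum. The only difference is presentational: you carry out the computation directly in the original free-boundary variables (via the factorization $v_{\rm N}^{\pm}-\partial_t\varphi=\alpha^{\pm}\Phi$ and flow invariance of $\Omega^{\pm}(t)$ and $\Gamma(t)$), whereas \cite{T09} works with the straightened, fixed-domain formulation — which is exactly the reformulation the paper says is possible, so you have in effect supplied the details it omits.
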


The proof of Proposition \ref{p2} is totally analogous to the proof in \cite{T09} of corresponding divergence and boundary constraints for classical current-vortex sheets. To be exact, the proof in \cite{T09} was done in the terms of ``straightened'' variables, i.e., for a reduced problem in a fixed domain, but it clear that the assertion in \cite{T09} can be reformulated for the original free boundary problem.

In this paper, we are interested in shock waves and current-vortex sheets. Therefore, we just briefly discuss another possible discontinuous solutions.
The case $\mathfrak{m}=0$, $\mathfrak{b}\ne 0$ could be a counterpart of contact discontinuities in full MHD \cite{BThand,LL,MTT18}. However, for this case the boundary conditions \eqref{RH1'}, \eqref{RH2'}  together with the requirements $h^{\pm}>0$ yield $[h]=0$ and $[v]=[B]=0$, i.e., the flow is continuous. The remaining case for which we can have a characteristic discontinuity in SMHD is the case $\mathfrak{m}\ne 0$, $\mathfrak{m}^2=\mathfrak{b}^2$. Let, without loss of generality, $\mathfrak{m}=\mathfrak{b}$. Then, \eqref{RH1'}, \eqref{RH2'}  imply
\[
[h]=0,\quad [v_{\rm N}]=0,\quad [B_{\rm N}]=0,\quad [v_\tau ]=[B_\tau ],\quad v_{\rm N}^+ -\partial_t\varphi=B_{\rm N}^+.
\]
This type of characteristic discontinuity was called in \cite{DeSt} {\it Alfv\'{e}n discontinuity}. It is indeed a counterpart of classical Alfv\'{e}n discontinuities in full MHD \cite{BThand,LL,IT} because it propagates with the Alfv\'{e}n velocity $c_{a{\rm N}}=B_{\rm N}^{\pm}$ (in the normal direction).

\section{Formal analogy between SMHD and 2D compressible elastodynamics}

\label{s3}

By introducing the function $p=p(h)=(g/2)h^2$, we rewrite the SMHD system \eqref{1}--\eqref{3} as follows:
\begin{equation}
\left\{
\begin{array}{l}
{\displaystyle\frac{1}{hc^2}\,\frac{{\rm d} p}{{\rm d}t} +{\rm div}\,{v} =0,} \\[6pt]
{\displaystyle h\frac{{\rm d}v}{{\rm d}t}-h({B}\cdot\nabla ){B}+{\nabla}p  =0,} \\[6pt]
{\displaystyle h\frac{{\rm d}B}{{\rm d}t}-h(B\cdot\nabla )v=0,}
\end{array}\right. \label{1a}
\end{equation}
where $c=\sqrt{p'(h)}=\sqrt{gh}$ is the gravity wave speed (we will below assume that \eqref{3''} holds by default). We again have a symmetric hyperbolic system because equations \eqref{1a} are recast in form \eqref{3'} with
\begin{equation}
 {U} =(p,v,B),\quad {A}_0= \mbox{block diag}\, (1/(hc^2) ,hI_4),
\label{3'a}
\end{equation}
\begin{equation}
{A}_1=\begin{pmatrix}
{\displaystyle\frac{v_1}{hc^2}} & e_1 & \underline{0}  \\[7pt]
e_1^{\top}& hv_1I_2 & -hB_{1}I_2  \\[3pt]
\underline{0}^{\top} &-hB_{1}I_2 &  hv_1I_2
\end{pmatrix},\quad
{A}_2=\begin{pmatrix}
{\displaystyle\frac{v_2}{hc^2}} & e_2 & \underline{0}  \\[7pt]
e_2^{\top}& hv_2I_2 & -hB_{2}I_2  \\[3pt]
\underline{0}^{\top} &-hB_{2}I_2 &  hv_2I_2
\end{pmatrix}.
\label{3'b}
\end{equation}

We now consider the equations of elastodynamics \cite{Daf,Gurt,Jos} governing the  motion of compressible isentropic inviscid elastic materials.
For 2D flows, it is written as the following quasilinear symmetric system \cite{CHW1,CHW2,MTT19}:
\begin{equation}
\left\{
\begin{array}{l}
{\displaystyle\frac{1}{\rho c^2}\,\frac{{\rm d} p}{{\rm d}t} +{\rm div}\,{v} =0,}\\[6pt]
{\displaystyle\rho\, \frac{{\rm d}v}{{\rm d}t}+{\nabla}p -\rho (F_1\cdot\nabla )F_1 -\rho (F_2\cdot\nabla )F_2=0 ,}\\[6pt]
\rho\,{\displaystyle \frac{{\rm d} F_j}{{\rm d}t}-\rho \,(F_j\cdot\nabla )v =0,} \quad j=1,2,
\end{array}\right. \label{9}
\end{equation}
where $\rho$ is the density, $v\in\mathbb{R}^2$  is the velocity, $F_1=(F_{11},F_{21})$ and $F_2=(F_{12},F_{22})$ are the columns of the deformation gradient $F\in \mathbb{M}(2,2)$, the pressure $p=p(\rho)$ is a smooth function of $\rho$ and $c^2=p'(\rho)$ is the square of the sound speed. The corresponding symmetric matrices can be easily written down (see \cite{MTT19}). The symmetric system \eqref{9} for $U=(p,v,F_1,F_2)$ is hyperbolic under the natural conditions $\rho >0$ and $p'(\rho )>0$. Moreover, the divergence constraints ${\rm div}\,(\rho F_j)=0$ ($j=1,2$) hold at any time if they are satisfied initially.

We now can see a formal (mathematical) analogy between systems \eqref{1a} and \eqref{9}. Indeed, let us consider the equation of state $p = A\rho^2$ ($A > 0$) of a polytropic elastic medium with the adiabatic index $\gamma =2$. Then, assuming that $F_2\equiv 0$, dropping the last vector equation (for $j=2$) in \eqref{9} and using the formal notations $\rho :=h$, $F_1:=B$ and $A:=g/2$, we get system \eqref{1a}. Clearly, the assumption $F_2\equiv 0$ contradicting the physical requirement $\det F >0$ is nonphysical but we do not need to care about a physical sense of our formal analogy between SMHD and 2D compressible elastodynamics. Actually, if for system \eqref{9} we take initial data with $F_2|_{t=0}=0$, then we obtain $F_2=0$ for all $t>0$.

After rewriting equations \eqref{9} in the form of a system of conservation laws we can obtain for it jump conditions \cite{CHW1,MTT19} which for the case of shock waves read \cite{MTT19}:
\[
[\mathfrak{m}]=0,\quad (\mathfrak{m}^2-\mathfrak{b}^2)[1/\rho]+|N|^2[p]=0, \quad [v_{\tau}]=0,\quad [F_{j\tau}]=0,\quad [\rho F_{j{\rm N}}]=0,\quad j=1,2,
\]
where $\mathfrak{m}=\mathfrak{m}^{\pm}|_{\Gamma}=\rho^\pm (v_{\rm N}^{\pm}-\partial_t\varphi)|_{\Gamma}$, $\mathfrak{b}=(\rho^\pm )^2(F_{1{\rm N}}^\pm)^2+(F_{2{\rm N}}^\pm)^2)|_{\Gamma}$, $F_{j{\rm N}}^{\pm}=F_{1j}^{\pm}-F_{2j}^{\pm}\partial_2\varphi $, etc. For the equation of state $p = A\rho^2$, setting $F_2\equiv 0$, $\rho :=h$, $F_1:=B$ and $A:=g/2$, we get the jump conditions \eqref{RH}.

\section{Structural stability of SMHD shock waves}

\label{s4}

We can reduce the free boundary problem  \eqref{RH}--\eqref{indat} (with $U$ and the matrices given by \eqref{3'a}, \eqref{3'b}) to that in the fixed domains $\mathbb{R}^2_{\pm}=\{\pm x_1>0,\ x_2\in\mathbb{R}\}$ by the simple change of variables ${x}'_1=x_1-\varphi (t,x_2)$. Dropping primes, from systems \eqref{21} we obtain
\begin{equation}
A_0(U^{\pm})\partial_tU^{\pm}+\widetilde{A}_{1}(U^{\pm},\varphi )\partial_1U^{\pm}+A_2(U^{\pm} )\partial_2U^{\pm}=0\quad \mbox{for}\ x\in \mathbb{R}^2_{\pm},
\label{23}
\end{equation}
where $\tilde{A}_{1}$ was defined in \eqref{bmat}. The boundary conditions for \eqref{23} are \eqref{RH} on the line $x_1=0$.
It is well-known that the {\it necessary} condition for the structural stability of shock waves is the fulfilment of the Lax $k$--shock conditions \cite{Lax}
\[
\lambda_{k-1}^- <\partial_t\varphi <\lambda_k^-,\quad \lambda_k^+ <\partial_t\varphi <\lambda_{k+1}^+
\]
for some fixed integer $k$, where for our case of system of five equations $1\leq k\leq 5$ and $\lambda_j^\pm$ ($j=\overline{1,5}$) are the eigenvalues of the matrices $A_{\rm N}^{\pm}:=\left(A_0(U^{\pm})\right)^{-1}\left( A_1(U^{\pm})-A_2(U^{\pm})\partial_2\varphi  \right)|_{x_1=0}$. Moreover, $\lambda_j^\pm$ are numbered as
\[
\lambda_1^-\leq \ldots \leq\lambda_5^-,\quad \lambda_1^+\leq \ldots \leq\lambda_5^+,
\]
and we take $\lambda^-_0:=-2|\partial_t\varphi |$ and $\lambda_6^+:=2|\partial_t\varphi |$. The the Lax conditions just define a correct number of boundary conditions which is in agreement with the number of outgoing (from the boundary to the domain) characteristics, i.e., with the number of positive/negative eigenvalues of the matrices $\left(A_0(U^{\pm})\right)^{-1}\widetilde{A}_{1}(U^{\pm},\varphi )|_{x_1=0}$.

By direct calculation we find the eigenvalues $\lambda_j^\pm$ (cf. \cite{DeSt}):
\[
 \lambda_{1,5}^{\pm}=v_{\rm N}^{\pm}\mp c_{g{\rm N}}^\pm,\quad \lambda_{2,4}^{\pm}=v_{\rm N}^{\pm}\mp c_{a{\rm N}}^\pm ,\quad  \lambda_3^{\pm}=v_{\rm N}^{\pm}\quad \mbox{at}\ x_1=0,
\]
where
\[
c_{g{\rm N}}^\pm =\sqrt{(B_{\rm N}^{\pm})^2+gh^{\pm}|N|^2},\quad c_{a{\rm N}}^\pm =B_{\rm N}^{\pm},
\]
and we assume for definiteness that $B_{\rm N}^{\pm}|_{x_1=0}\geq 0$ and $v_{\rm N}^{\pm}|_{x_1=0}>\partial_t\varphi$ (i.e., $\mathfrak{m}>0$ and $\mathfrak{b}>0$). Then, $k$--shocks with $k\geq 3$ are not realizable. By virtue of the first two boundary conditions in \eqref{RH}, the inequalities $\lambda_2^->\partial_t\varphi$ and $\lambda_2^+<\partial_t\varphi$ appearing for $k=2$ contradict each other. That is, as in gas dynamics (as well as in elastodynamics \cite{MTT19}), only 1-shocks are possible:
\begin{equation}
v_{\rm N}^--\partial_t\varphi >c_{g{\rm N}}^-, \quad c_{a{\rm N}}^+<v_N^+-\partial_t\varphi <c_{g{\rm N}}^+ \quad \mbox{at}\ x_1=0.
\label{25}
\end{equation}

We note that 1--shocks are {\it extreme shocks} in the sense that ahead of the shock there are no waves outgoing from the shock  ($\lambda_j^->\partial_t\varphi $ for all $j=\overline{1,5}$). We see that the Lax conditions \eqref{25} guarantee the fulfilment of the shock wave assumptions $\mathfrak{m} \neq 0$ and $\mathfrak{m}^2\neq\mathfrak{b}^2$. Moreover, multiplying the first and third inequalities by $h^-_{|x_1=0}$ and $h^+_{|x_1=0}$ respectively and taking into account the first two boundary conditions in \eqref{RH}, we get
\[
\sqrt{\mathfrak{b}^2+g(h^{-})^3|N|^2}<\mathfrak{m}<\sqrt{\mathfrak{b}^2+g(h^{+})^3|N|^2}.
\]
This implies $[h]>0$.  On the other hand, using the third condition in \eqref{RH}, we can easily show that $[h]>0$ implies the fulfilment of the Lax conditions \eqref{25}.
That is, for shock waves the Lax conditions \eqref{25} are equivalent to the sole inequality $[h]>0$. Since the Lax conditions are necessary for the structural stability of shock waves, we conclude that {\it the fluid height increases} across the front of a structurally stable shock.

The Lax shock conditions is a 1D criterium whose fulfilment is necessary for multidimensional structural stability, but they are, in general, not sufficient for the local-in-time existence of corresponding shock front solutions. However, for SMHD shock waves we can show that the requirement $[h]>0$ being satisfied at each point of the initial shock front is also sufficient for structural stability (by default we, of course, assume that the initial data satisfy corresponding regularity conditions, compatibility conditions, etc.). As is known \cite{BThand,Maj,Met,MZ}, the most important (and, in some sense, sufficient) step towards the proof of structural stability is the proof of uniform stability of corresponding planar (rectilinear for the 2D case) shock waves. Uniform stability means the fulfilment of the uniform Kreiss--Lopatinski condition \cite{Kreiss,Maj,Met} by the linearized constant coefficient problem associated with a planar (or rectilinear for the 2D case) shock wave.

Since the SMHD equations are Galileo invariant, without loss of generality we can consider the unperturbed rectilinear shock wave with the equation $x_1=0$. We consider a constant solution
$(U^+,U^-,\varphi) =(\widehat{U}^+,\widehat{U}^-,\hat{\varphi})$ of systems \eqref{23} and the boundary conditions \eqref{RH} associated with this shock wave:
\[
\widehat{U}^{\pm}=(\hat{p}^{\pm},\hat{v}^{\pm},\widehat{B}^{\pm})=(g(\hat{h}^{\pm})^2/2,\hat{v}_1^{\pm},\hat{v}_2^{\pm},\widehat{B}_1^{\pm},\widehat{B}^{\pm}_2), \quad \hat{\varphi} =0,\quad  \hat{c}^{\pm}=\sqrt{g\hat{h}^{\pm}}.
\]
Here and below all the hat values are given constants. In view of the third condition in \eqref{RH}, $\hat{v}_2^+=\hat{v}_2^-$ and we can choose a reference frame in which $\hat{v}_2^+=\hat{v}_2^-=0$. The rest constants satisfy the relations
\begin{equation}
\frac{\hat{h}^+}{\hat{h}^-}=\frac{\hat{v}_1^-}{\hat{v}_1^+}=\frac{\widehat{B}_1^-}{\widehat{B}_1^+},\quad (\hat{v}_1^+)^2-(\widehat{B}_{1}^+)^2=\frac{g\hat{h}^-}{2}\Big( 1 + \frac{\hat{h}^-}{\hat{h}^+}\Big),\quad \widehat{B}_{2}^+=\widehat{B}_{2}^-
\label{sbc}
\end{equation}
following from \eqref{RH}. We assume that $\hat{v}_1^{\pm}>0$ and $\widehat{B}_1^{\pm}>0$. The fluid height increase assumption $\hat{h}^+>\hat{h}^-$ guarantees that the Lax conditions \eqref{25} hold for the constant solution:
\begin{equation}
M_1<M<M_*,
\label{Mach}
\end{equation}
where $M={\hat{v}_1^+}/{\hat{c}^+}$ is the downstream Froude number (here we mimic the notations from \cite{MTT19} where $M$ denotes the Mach number), $M_1=\widehat{B}_1^+/{\hat{c}^+}$ and $M_*=\sqrt{1+M_1^2}$.

Linearizing problem \eqref{23}, \eqref{RH} (at $x_1=0$), \eqref{indat} about the described constant solution, we get a linear constant coefficient problem for the perturbations $\delta U^{\pm}$ and $\delta\varphi$. Since our shock waves are extreme shocks, all the characteristics of the linear system in $\mathbb{R}^2_-$
for the perturbation $\delta U^-$ ahead of the shock are incoming in the shock, i.e., this linear system does not need any boundary conditions. As is known, without loss of generality we may then assume that $\delta U^-= 0$. Behind of the shock we obtain the linear constant coefficients system
\begin{equation}
A_0(\widehat{U}^+)\partial_t(\delta U^+)+\widetilde{A}_{1}(\widehat{U}^+,0 )
\partial_1(\delta U^+)
+A_2(\widehat{U}^+ )\partial_2(\delta U^+)=0\quad \mbox{for}\ x\in \mathbb{R}^2_+
\label{ls}
\end{equation}
for the perturbation $\delta U^+=(\delta p^+,\delta v^+,\delta B^+)$, where $\delta v^+=(\delta v^+_1,\delta v^+_2)$, etc.

As in \cite{MTT19}, it is convenient to reduce \eqref{ls} to a dimensionless form by introducing the following scaled values:
\[
x'=\frac{x}{l},\quad t'=\frac{\hat{v}_1^+t}{l},\quad p=\frac{\delta p^+}{\hat{h}^+(\hat{c}^+)^2},\quad v=\frac{\delta v^+}{\hat{v}_1^+},\quad B=\frac{\delta B^+}{\hat{c}^+},\quad  \varphi =\frac{\delta\varphi}{l},
\]
where $l$ is a typical length (recall that $\hat{v}_1^+>0$). The reduction to a dimensionless form is the same as in \cite{MTT19} if we formally set $\hat{h}^+=\hat{\rho}^+$, $\delta B^+=\delta F_1$ (see \eqref{9}), etc. After dropping the primes and taking into account that $\hat{v}_2^+=0$, system \eqref{ls} is rewritten as the following linear system for the scaled perturbation $U=(p,v,B)$ (with $v=(v_1,v_2)$ and $B=(B_1,B_2)$) behind of the shock wave:
\begin{equation}
\left\{
\begin{array}{ll}
Lp+{\rm div}\,v=0, & \\ M^2Lv-(\mathcal{B}\cdot\nabla )B+\nabla p =0, & \\ LB- (\mathcal{B}\cdot\nabla )v=0 & \quad \mbox{for}\ x\in \mathbb{R}^2_+,
\end{array}\right. \label{lsm}
\end{equation}
where $L =\partial_t+\partial_1$, $\mathcal{B}=(M_{1},{M}_{2})$ and $M_2=\widehat{B}_2^+/{\hat{c}^+}$. If in the corresponding linear system behind of the shock wave in 2D elastodynamics \cite{MTT19} we formally set $\mathcal{F}_2=F_2=0$, where $\mathcal{F}_2=\widehat{F}_2^+/\hat{c}^+$ (see \cite{MTT19}), and then drop the last ``$0=0$'' vector equation (for $F_2$), then we get our system \eqref{lsm}.

Taking into account \eqref{sbc}, $\delta U^-= 0$ and omitting technical calculations, we get the following linearized boundary conditions for system \eqref{lsm} written in a dimensionless form (in fact, we can avoid technical calculations if in the corresponding boundary conditions in \cite{MTT19} we just set $\mathcal{F}_2 =0$, $F_2=0$, etc.):
\begin{equation}\label{lbc}
\left\{
\begin{array}{ll}
{\displaystyle v_1 +d_0p -\frac{\ell_0}{M^2R}\,v_2=0}, &\\[9pt]
 a_0p+(1-R)\partial_{\star}\varphi =0,\qquad
v_2 +(1-R)\partial_2\varphi= 0, & \\[9pt]
{\displaystyle B_{1}+M_{1}\,p-\frac{M_{2}}{R}\,v_2=0,} \qquad B_{2}-M_{1}\,v_2=0 &\quad \mbox{at}\ x_1=0,
\end{array}
\right.
\end{equation}
where
\[
d_0=\frac{M_*^2+M^2}{2M^2},\quad R=\frac{\hat{h}^+}{\hat{h}^-},\quad \ell_0=M_1M_2,\quad  a_0=-\frac{\beta^2R}{2M^2},
\]
\[
\beta =\sqrt{M_*^2-M^2}\quad (\mbox{cf.}\ \eqref{Mach}),\quad \partial_{\star}=\partial_t-\frac{\ell_0}{M^2}\,\partial_2.
\]
Here we again mimic all the notations from \cite{MTT19}. In view of our fluid height increase assumption, we have $R>1$.  Our constant coefficients linearized problem is thus \eqref{lsm}, \eqref{lbc} with the initial data
\begin{equation}
{U} (0,{x})={U}_0({x}),\quad {x}\in \mathbb{R}^2,\quad \varphi (0,{x}_2)=\varphi _0({x}_2),\quad {x}_2\in\mathbb{R}.\label{lindat}
\end{equation}
As in \cite{MTT19}, we can show that the linearized version of a nonlinear divergence constraint is just a restriction on the initial data. In our case, the linearization of \eqref{4'} written a dimensionless form is
\begin{equation}\label{8l}
{\rm div}\,B +(\mathcal{B}\cdot\nabla p)=0\quad \mbox{for}\ x \in\mathbb{R}_2^+ .
\end{equation}
If the initial data \eqref{lindat} satisfy \eqref{8l}, then \eqref{8l} holds for all $t\in [0,T]$.

By adapting the energy method proposed by Blokhin \cite{Bl79} (see also \cite{BThand}) for gas dynamical shock waves, it was proved in \cite{MTT19} that all compressive shock waves in 2D elastodynamics are uniformly stable for convex equations of state. We could literally follow arguments in \cite{MTT19} (technical calculations for SMHD shock are even much simpler as in elastodynamics), but, in fact, we do not need to do this because all the arguments in \cite{MTT19} stay valid for the nonphysical case when $\det \widehat{F}^+ =0$ for the unperturbed deformation gradient $\widehat{F}^+$ behind of the shock. In particular, we can set $\widehat{F}_2^+=0$. Then, for the initial data with $F_2|_{t=0}=0$, where $F_2$ is the perturbation of the second column of the deformation gradient, we get $F_2=0$ for all $t>0$. Since the function $p=p(h)=(g/2)h^2$ in \eqref{1a} is convex, after that we can just translate the results in \cite{MTT19} to our problem \eqref{lsm}--\eqref{lindat} and conclude that all the rectilinear SMHD shock waves with $[\hat{h}]>0$ are uniformly stable.

Moreover, the condition $[\hat{h}]>0$ being equivalent to the Lax condition is necessary and sufficient for uniform stability and, referring to \cite{MTT19}, we obtain the a priori estimate
\begin{equation}
\begin{split}
\| U\|_{H^2([0,T]\times\mathbb{R}^2_+)}+ &
\| U_{|x_1=0}\|_{H^2([0,T]\times\mathbb{R})} + \| \varphi\|_{H^3([0,T]\times\mathbb{R})}
\\ &
\leq C\big\{\| U_0\|_{H^2(\mathbb{R}^2_+)} +\| \varphi_0\|_{L^2(\mathbb{R})} +\| f\|_{H^2([0,T]\times\mathbb{R}^2_+)}
+\| g\|_{H^2([0,T]\times\mathbb{R})}\big\},
\end{split}
\label{aprest'}
\end{equation}
for problem \eqref{lsm}--\eqref{lindat} with a given source term $f(t,x)\in \mathbb{R}^5$ in the right-hand side of the interior equations \eqref{lsm} and a given source term $g(t,x_2)\in \mathbb{R}^5$ in the right-hand side of the boundary conditions \eqref{lbc}. We refer the reader to \cite{MTT19} for the whole details of deriving estimate \eqref{aprest'}. Here we just note that the energy method proposed in \cite{Bl79} and adapted in \cite{MTT19} for shock waves in elastodynamics is based on the usage of a symmetrization of the wave equation for $p$. For our problem, using the boundary constraint \eqref{8l}, from system \eqref{lsm} we easily derive the hyperbolic equation
\[
\left(M^2{L}^2- \Delta -(\mathcal{B}\cdot\nabla )^2\right)p=0\quad \mbox{for}\ x \in\mathbb{R}_2^+
\]
whose canonical form is the wave equation. The crucial point is that, unlike full MHD \cite{BThand}, we can obtain a separate equation for $p$.

Since the priori estimate \eqref{aprest'} is derived in \cite{MTT19} by the construction of a so-called strictly dissipative 2-symmetrizer \cite{Tsiam}, referring to \cite{Tsiam}, we get the structural (nonlinear) stability of SMHD shock waves for which the fluid height increase condition $[h]>0$ holds at each point of the initial shock front.

\section{Structural stability of current-vortex sheets in a shallow layer of a conducting fluid}

\label{s5}

We now consider the free boundary problem \eqref{21}, \eqref{indat}, \eqref{cvs} for current-vortex sheets in SMHD. For current-vortex sheets in ideal compressible MHD the crucial idea was the secondary symmetrization of the MHD system proposed in \cite{T05}. The usage of this symmetrization gives a condition sufficient for the neutral stability of a planar current-vortex sheet, i.e., a condition sufficient for the fulfillment of the Kreiss--Lopatinski condition for the constant coefficients linearized problem. Having in hand this condition and assuming that it holds at each point of the initial current-vortex sheet, the local-in-time existence and uniqueness theorem for the nonlinear problem was independently proved in \cite{CW1,CW2} and \cite{T09}.

A secondary symmetrization of the quasilinear symmetric system \eqref{1}--\eqref{3}/\eqref{3'} can be found in a much easier way than for full MHD. Indeed, let $\lambda =\lambda (U)$ be an arbitrary function of the unknown $U$. From system  \eqref{1}--\eqref{3} we easily deduce the equations
\begin{align}
\frac{g}{h}\left\{\partial_th + ((v-\lambda B)\cdot \nabla )h\right\} +g{\rm div}\,{v} - g \lambda{\rm div}\,B & =0, \label{1s}\\[3pt]
 \partial_tv-\lambda\partial_tB +((v+\lambda B)\cdot \nabla )v-((B+\lambda v)\cdot \nabla )B+g{\nabla}h & =0, \label{2s}\\[3pt]
 -\lambda\partial_tv+\partial_tB -((B+\lambda v)\cdot \nabla )v+((v+\lambda B)\cdot \nabla )B-g\lambda{\nabla}h & =0\label{3s}
\end{align}
which can be written as the following symmetric system:
\begin{equation}
\label{3's}
B_0(U )\partial_tU+B_1(U)\partial_1U +B_2(U)\partial_2U=0,
\end{equation}
where
\[
B_0=\begin{pmatrix}
{\displaystyle\frac{g}{h}} & \underline{0} &\underline{0} \\[7pt]
\underline{0}^{\top}& I_2& -\lambda I_2 \\[3pt]
\underline{0}^{\top} & -\lambda I_2 & I_2
\end{pmatrix},\quad
B_1=\begin{pmatrix}
{\displaystyle\frac{g}{h}(v_1-\lambda B_1)} & ge_1 & -g\lambda e_1  \\[7pt]
ge_1^{\top}& (v_1+\lambda B_1)I_2 & -(B_{1}+\lambda v_1)I_2  \\[3pt]
-g\lambda e_1^{\top}&-(B_{1}+\lambda v_1)I_2 &  (v_1+\lambda B_1)I_2
\end{pmatrix},
\]
\[
B_2=\begin{pmatrix}
{\displaystyle\frac{g}{h}(v_2-\lambda B_2)} & ge_2 & -g\lambda e_2  \\[7pt]
ge_2^{\top}& (v_2+\lambda B_2)I_2 & -(B_{2}+\lambda v_2)I_2  \\[3pt]
-g\lambda e_2^{\top}&-(B_{2}+\lambda v_2)I_2 &  (v_2+\lambda B_2)I_2
\end{pmatrix}.
\]
System \eqref{3's} is hyperbolic ($B_0>0$) under the natural physical restriction \eqref{3''} and the condition
\begin{equation}\label{3''s}
  |\lambda |<1.
\end{equation}
Equations \eqref{2s} and \eqref{3s} are just linear combinations of \eqref{2} and \eqref{3} whereas \eqref{1s} comes from \eqref{1} and the divergence constraint \eqref{4} written as $(B\cdot \nabla )h+h{\rm div}\,B=0$.

The linearized constant coefficient problem associated with a rectilinear current-vortex sheet $x_1=0$ reads:
\begin{align}
 B_0(\widehat{U}^\pm )\partial_tU^\pm +B_1(\widehat{U}^\pm )\partial_1U^\pm +B_2(\widehat{U}^\pm )\partial_2U^\pm =0 & \qquad \mbox{for }x\in\mathbb{R}^3_\pm, \label{lcvs1}\\[6pt]
 \partial_t\varphi=v_1^{\pm}-\hat{v}_2^\pm\partial_2\varphi,\quad [h]=0 & \qquad \mbox{at}\ x_1=0,\label{lcvs2}
\end{align}
\begin{equation}
{U}^\pm (0,{x})={U}^\pm_0({x}),\quad {x}\in \mathbb{R}^2_{\pm},\quad \varphi (0,{x}_2)=\varphi _0({x}_2),  \quad {x}_2\in\mathbb{R},\label{lcvs3}
\end{equation}
where $\widehat{U}^\pm =(\hat{h},0,\hat{v}_2^\pm , 0,\hat{B}_2^\pm )$, $\hat{\varphi}=0$ is a constant solution of \eqref{21}, \eqref{cvs}, \eqref{bcon} whereas $U^\pm =(h^\pm,v^\pm,B^\pm)$ and $\varphi$ denote perturbations. The boundary conditions \eqref{lcvs2} are the linearization of \eqref{cvs} about this constant solution. Moreover, the conditions
\begin{equation}\label{lcvs4}
B_1^{\pm}=\widehat{B}_2^\pm\partial_2\varphi \quad \mbox{at}\ x_1=0
\end{equation}
being the linearizations of \eqref{bcon} are boundary constraints on the initial data \eqref{lcvs3}.

For systems \eqref{lcvs1} we obtain the energy identity
\[
\frac{{\rm d}I(t)}{{\rm d}t} =\int\limits_{\mathbb{R}}\big[(B_1(\widehat{U})U\cdot U )\big]{\rm d}x_2,
\]
where
\[
I(t)= \int\limits_{\mathbb{R}^2_+}(B_0(\widehat{U}^+)U^+\cdot U^+ ){\rm d}x + \int\limits_{\mathbb{R}^2_-}(B_0(\widehat{U}^-)U^-\cdot U^- ){\rm d}x
\]
and, in view  of  \eqref{lcvs2} and \eqref{lcvs4}, we have
\[
\big[(B_1(\widehat{U})U\cdot U )\big]=2g[h(v_1-\hat{\lambda}B_1)]=h^+_{|x_1=0}[\hat{v}_2-\hat{\lambda}\widehat{B}_2]\partial_2\varphi ,
\]
with $\hat{\lambda}^{\pm}=\lambda (\widehat{U}^\pm )$. Assuming that $\widehat{B}_2^+\neq 0$ or $\widehat{B}_2^-\neq 0$  and choosing such $\hat{\lambda}^{\pm}$ that $[\hat{v}_2-\hat{\lambda}\widehat{B}_2]=0$, we come to $I(t)=I(0)$. Under the hyperbolicity condition \eqref{3''s} satisfied for $\hat{\lambda}^{\pm}$, we then deduce an $L^2$ a priori estimate for the perturbations $U^{\pm}$ (see \cite{T05}). We can also derive a priori estimates for $U^{\pm}$ and $\varphi$ with loss of derivatives from the source terms introduced in the right-hand sides of \eqref{lcvs1} and \eqref{lcvs2} as well as analogous a priori estimates for the case of variable coefficients (see \cite{T05,T09}).

For writing down an exact form of a condition sufficient for the linear (neutral) stability of a rectilinear current-vortex sheet we have to analyze the requirements
\begin{equation}\label{lcvs5}
[\hat{v}_2]=\hat{\lambda}^+\widehat{B}_2^+ -\hat\lambda^-\widehat{B}_2^- ,\quad |\hat\lambda ^{\pm}|<1
\end{equation}
(for $\widehat{B}_2^+\neq 0$ or $\widehat{B}_2^-\neq 0$) coming from  $[\hat{v}_2-\hat{\lambda}\widehat{B}_2]=0$ and \eqref{3''s}. The widest range of the parameters $\hat{v}_2^\pm$ and $\widehat{B}_2^\pm$ is achieved for such a choice of $\hat{\lambda}^\pm$ that $|\hat\lambda^+|=|\hat\lambda^-|=[\hat{v}_2]/(|\widehat{B}_2^+|+|\widehat{B}_2^-|)$. Then, the inequalities in \eqref{lcvs5} give us the desired sufficient stability condition
\begin{equation}\label{ssc}
|[\hat{v}_2]|< |\widehat{B}^+_2|+|\widehat{B}^-_2|.
\end{equation}

The rest arguments are the same as in \cite{T09} and we conclude the structural stability of current-vortex sheets provided that the {\it sufficient stability condition} (cf. \eqref{ssc})
\begin{equation}
\left\{
\begin{array}{l}
 |B^+_2|_{\Gamma}+|B^-_2|_{\Gamma}-|[v_2]|\geq \varepsilon >0\\
 \mbox{and}\\
 |B^+_2|_{\Gamma}\geq \varepsilon >0\quad \mbox{or}\quad |B^-_2|_{\Gamma}\geq \varepsilon >0
 \end{array}
\label{ssc1}
\right.
\end{equation}
holds for the initial data \eqref{indat} with some  fixed constant $\varepsilon$ (of course, we should also assume that the initial data satisfy the hyperbolicity condition \eqref{3''} as well as appropriate regularity and compatibility conditions, see \cite{T09}). In spite of the fact that the first inequality in \eqref{ssc1} implies $|B^+_2|_{\Gamma}+|B^-_2|_{\Gamma}\neq 0$, we should additionally assume that either $B_2^+$ or $B_2^-$ does not vanish at each point of $\Gamma$. This assumption is necessary for resolving \eqref{cvs}, \eqref{bcon} for $(\partial_t\varphi ,\partial_2\varphi )$ in the same way at each point of $\Gamma$ and applying the arguments from \cite{T05,T09}.

In principle, for finding a condition which is not only sufficient but also necessary for structural stability we can again use the formal analogy between SMHD and 2D compressible elastodynamics. To this end one needs to revisit the study in \cite{CHW1,CHW2,Hu} for compressible vortex sheets in 2D elastodynamics for adapting it for SMHD current-vortex sheets. The nonlinear existence theorem for vortex sheets was proved in \cite{Hu} for a finite (not necessarily short) time but
under the condition that the initial discontinuity is close to a linearly stable rectilinear discontinuity. In fact,  the assumption that the initial data are
close to a piecewise constant solution could be removed provided that the time of existence is sufficiently short.

We now just translate the results from \cite{CHW1,CHW2,Hu} to particular SMHD current-vortex sheets which are close to a rectilinear discontinuity. Let us consider  a piecewise constant solution
\[
(\rho ,v,F_1,F_2) =(\hat{\rho} , 0,\hat{v}_2^\pm ,0,\widehat{F}_{21}^\pm,0,\widehat{F}_{22}^\pm)\quad \mbox{for } \pm x_1>0
\]
of system \eqref{9} associated with a rectilinear vortex sheet with the equation $x_1=0$. In \cite{CHW1,CHW2,Hu}, without loss of generality a reference frame in which $\hat{v}_2^+=-\hat{v}_2^-$ and, in view of the physical sense of the deformation gradient, a scale of measurement for which $\widehat{F}_{21}^+=-\widehat{F}_{21}^-$ and $\widehat{F}_{22}^+=-\widehat{F}_{22}^-$ were chosen (here and below we translate the notations from \cite{CHW1,CHW2,Hu} to ours). For SMHD, it seems it is impossible to change scale of measurement such that $\widehat{B}^+_2=-\widehat{B}^-_2$. This is why a rectilinear current-vortex sheet for which $\widehat{B}^+_2=-\widehat{B}^-_2$ is a particular case.

Using the formal analogy between SMHD and elastodynamics described above (in particular, we formally set $\widehat{F}_{22}^+=0$), the results obtained in \cite{CHW1,Hu} and the fact that for the chosen reference frame $\hat{v}_2^+=[\hat{v}_2]/2$, we obtain the following necessary and sufficient linear stability condition for a rectilinear current-vortex sheet with $\widehat{B}^+_2=-\widehat{B}^-_2$:
\begin{equation}\label{nsc}
|[\hat{v}_2]| \leq 2|\widehat{B}_2^+| \quad \mbox{or}\quad |[\hat{v}_2]| \geq 2\sqrt{(\widehat{B}_2^+)^2+2g\hat{h}}.
\end{equation}
For the particular case $\widehat{B}^+_2=-\widehat{B}^-_2$ our sufficient linear stability condition \eqref{ssc} reads: $|[\hat{v}_2]|< 2|\widehat{B}^+_2|$. That is, up to the transition point $|[\hat{v}_2]|= 2|\widehat{B}^+_2|$, for which one has a weaker a priori estimate in \cite{CHW1}, our sufficient stability condition successfully covers one of the two ``alternative'' parts of the whole stability domain described by \eqref{nsc}.

Moreover, referring to \cite{Hu} for a final nonlinear existence result for vortex sheets and translating it to SMHD current-vortex sheets, we can conclude the existence of current-vortex sheets being a small perturbation of a rectilinear current-vortex sheet with $\widehat{B}^+_2=-\widehat{B}^-_2$, provided that
\begin{equation}\label{nsc'}
|[\hat{v}_2]| < 2|\widehat{B}_2^+| \quad \mbox{or}\quad |[\hat{v}_2]| > 2\sqrt{(\widehat{B}_2^+)^2+2g\hat{h}}
\end{equation}
and
\begin{equation}\label{nsc''}
\begin{split}
& |[\hat{v}_2]|\neq |\widehat{B}_2^+|,\quad  |[\hat{v}_2]| \neq \sqrt{(\widehat{B}_2^+)^2+g\hat{h}}-|\widehat{B}_2^+| ,\\[6pt]
& |[\hat{v}_2]|\neq \sqrt{(\widehat{B}_2^+)^2+g\hat{h}},\quad |[\hat{v}_2]|\neq |\widehat{B}_2^+|\sqrt{\frac{(\widehat{B}_2^+)^2+2g\hat{h}}{(\widehat{B}_2^+)^2+g\hat{h}}}.
\end{split}
\end{equation}
It is natural that for the nonlinear result one needs strict inequalities in \eqref{nsc'}. At the same time, as is noted in \cite{CHW1,CHW2,Hu}, the exceptional points like those in \eqref{nsc''} have no physical meaning and appear due to certain requirements of the technique of Kreiss-type symmetrizers and the paradifferential calculus (an analogous nonphysical restriction appears in \cite{MT,MTW} for vortex sheets for the 2D nonisentropic Euler equations).
Indeed, the exceptional point $|[\hat{v}_2]|= |\widehat{B}_2^+|$ in any case appears inside of the stability subdomain $|[\hat{v}_2]| < 2|\widehat{B}_2^+|$, but the energy method giving us the sufficient stability $|[\hat{v}_2]| < 2|\widehat{B}_2^+|$ (for $\widehat{B}^+_2=-\widehat{B}^-_2$, cf. \eqref{ssc})  does not require that $|[\hat{v}_2]|\neq |\widehat{B}_2^+|$.

At last, we note that in this paper we do not describe mathematical details of the formulated linear and nonlinear stability/existence results like functional setting, regularity assumptions, compatibility conditions, etc. For such issues we refer the reader to the works cited above. In particular, here we do not describe how can one improve the structural stability result for SMHD current-vortex sheets (under the sufficient stability condition \eqref{ssc1}) in the sense that instead of the usage of the anisotropic weighted Sobolev spaces $H^s_*$ (as in \cite{T05,T09}) we can derive a priori estimates for the linearized problem and formulate the final nonlinear result in the classical Sobolev spaces $H^s$. We only note here that, as for current-vortex sheets in incompressible MHD \cite{MTT08}, this can be done by estimating missing normal derivatives through a current-vorticity-type linearized system.  In fact, the analogous idea was used in \cite{T18} for the free boundary problem in compressible elastodynamics where the role of the current is played by $\nabla\times F_j$ ($j=1,2,3$), with $F_j$ being columns of the deformation gradient $F\in \mathbb{M}(3,3)$.

\section*{Acknowledgements}
This work was supported in part by RFBR (Russian Foundation for Basic Research) grant No. 19-01-00261-a.

\end{document}